\newtheorem{theorem}{Theorem}[section]
\newtheorem{remark}[theorem]{Remark}
\numberwithin{equation}{section}
\newcommand{\be}{\begin{eqnarray}}
\newcommand{\en}{\end{eqnarray}}
\newcommand{\no}{\nonumber}
\newcommand{\Sp}{\mathbb{S}}
\newcommand{\R}{\mathbb{R}}
\newcommand{\Hy}{\mathbb{H}}
\newcommand{\Addresses}{{
\bigskip 
\footnotesize
Instituto Federal Goiano, Campus Trindade,  Brazil \\
\textit{E-mail address:} \texttt{adriano.bezerra@ifgoiano.edu.br}
\vspace{.3cm}\\
Universidade de S\~ao Paulo, S\~ao Carlos, Brazil \\
\textit{E-mail address:} \texttt{manfio@icmc.usp.br} \\
}}
\begin{document}

\author{A. C. Bezerra and  F. Manfio}

\title{Rigidity and stability estimates for minimal submanifolds in 
the hyperbolic space}

\date{}
\maketitle

\noindent \emph{2010 Mathematics Subject Classification:} 35P15, 
53C24, 53C42.
\vspace{2ex}

\noindent \emph{Key words}: Super stability operator, Eigenvalues, 
Minimal submanifolds.

\begin{abstract} 
In this paper we establish conditions on the length of the second 
fundamental form of a complete minimal submanifold $M^n$ in the
hyperbolic space $\mathbb{H}^{n+m}$ in order to show that $M^n$
is totally geodesic. We also obtain sharp upper bounds 
estimates for the first eigenvalue of the super stability operator 
in the case of $M$ is a surface in $\mathbb{H}^{4}$.
\end{abstract}

\section{Introduction}

In the seminal work \cite{S2}, J. Simons established a formula
for the Laplacian of the second fundamental form of a minimal
submanifold in a space form and important applications have 
been obtained, among which we highlight one: if $M^n$ is a 
closed minimal submanifold in the unit sphere $\Sp^{n+m}$, 
with squared norm of the second fundamental form less than
$n/(2-1/m)$, then $M$ is totally geodesic. Simon's work has 
been of great interest to differential geometers, and in the last
decade several interesting gap theorems for submanifolds 
have been successfully obtained. We refer the reader to 
\cite{AM}, \cite{BRP}, \cite{BZ}, \cite{CaMa}, \cite{CIS}, 
\cite{SCK}, \cite{AB}, \cite{OL}, \cite{ENR}, \cite{NW}, 
\cite{OX}, and the references therein.

A natural problem is to ask whether a Simon's type pinching theorem 
holds for minimal submanifolds in other ambient spaces. For example,
in the case of hyperbolic space, Xia-Wang \cite{CHI} showed that the
result is true if the $L^{2}$-norm on geodesic balls of the length of the
second fundamental form of the minimal submanifold has less than 
quadratic growth and if the dimension of the submanifolds is not less
than $5$. More precisely, the condition established in \cite{CHI} is 
given by
\be\label{1.1}
\sup_{x\in M}|A|^{2}(x)<
\left\{ \begin{array}{ll}
\frac{(n+2)(n-1)^{2}}{4n}-n, & if \ m=1, \\
\frac{2}{3}\left(\frac{(mn+2)(n-1)^{2}}{4mn}-n\right), & if \ m\geq2. \\
\end{array} \right.
\en
The case of law dimension, not considered in \cite{CHI}, was studied
by Oliveira-Xia \cite{OX}. However, the condition obtained by 
Oliveira-Xia on the norm of the second fundamental form depends 
of more constants compared to condition \eqref{1.1}, and it is 
required that $n^2-6n+1+8/m>0$.

Our first main result gives an improvement of that obtained by 
Xia-Wang \cite{CHI} and Oliveira-Xia \cite{OX}. The key point is to 
make a suitable change in the condition \eqref{1.1} for 
submanifolds $M^n$ in the hyperbolic space, when $n\geq 6$. 
In fact, condition \eqref{1.3} below does not depends on the 
codimension of the submanifold, but only of the dimension.
The $L^{2}$-norm on geodesic balls of the length of the second 
fundamental form of $M$ was replaced by $L^{d}$-norm, where
$d$ was chosen in an appropriate interval. Here, $A$ and 
$B_ {p}(R)$ denote the second fundamental form of $M$ and
the geodesic ball of radius $R$ centered at $p\in M$,
respectively. 

\begin{theorem}\label{Theorem 1.1}
Let $M^n$, $n\geq 6$, be a complete immersed minimal submanifold
in the hyperbolic space $\Hy^{n+m}$. Suppose that there exists 
a constant $d\in 2(1-\sqrt{2/mn},1+\sqrt{2/mn})$ such that
\be\label{1.2}
\lim_{R\rightarrow +\infty}\frac{1}{R^{2}}\int_{B_p(R)}|A|^{d}=0.
\en
If the length of the second fundamental form $A$ of $M$ satisfies
\be\label{1.3}
\sup_{x\in M}|A|^{2}(x)<C(n):=
\left\{ \begin{array}{ll}
\frac{(n-1)^{2}}{4}-n,  & if  \ m=1,    \\
\frac{(n-1)^{2}}{6}-\frac{2}{3}n, & if \ m\geq2,  \\
\end{array} \right.
\en
then $M$ is total geodesic.
\end{theorem}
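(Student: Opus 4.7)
The plan is to combine a Simons-type pointwise inequality for $|A|$ with the Cheung--Leung spectral bound
\begin{equation*}
\int_M|\nabla\phi|^2\geq \frac{(n-1)^2}{4}\int_M\phi^2, \qquad \phi\in C_0^\infty(M),
\end{equation*}
which holds on every minimal submanifold $M^n$ of $\Hy^{n+m}$ (a consequence of McKean's estimate $\lambda_1(\Hy^n)=(n-1)^2/4$), and then to propagate the resulting weighted integral inequality to infinity via a cutoff tuned to \eqref{1.2}.

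First I would establish the pointwise estimate, obtained from Simons' identity in $\Hy^{n+m}$ with the Li--Li refinement of the Simons nonlinear term together with the refined Kato inequality $|\nabla A|^2\geq(1+\epsilon_m)|\nabla |A||^2$ where $\epsilon_m=2/(mn)$,
\begin{equation*}
|A|\Delta|A|\geq \epsilon_m|\nabla|A||^2 - a_m|A|^4 - n|A|^2,
\end{equation*}
with $a_1=1$, $a_m=3/2$ for $m\geq 2$, and where the $-n|A|^2$ accounts for the hyperbolic curvature. An elementary computation reveals the key identity $a_m C(n)+n=(n-1)^2/4$; thus \eqref{1.3} is precisely the condition $a_m|A|^2+n<(n-1)^2/4$, carefully calibrated against the Cheung--Leung constant.

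Multiplying by $|A|^{d-2}\varphi^2$ for nonnegative $\varphi\in C_0^\infty(M)$ and integrating by parts, alongside applying Cheung--Leung to $\phi=\varphi|A|^{d/2}$, produces the pair
\begin{equation*}
a_m P + nK - 2J \geq (\epsilon_m+d-1)I, \qquad \frac{d^2}{4}I + dJ + L \geq \frac{(n-1)^2}{4}K,
\end{equation*}
where $I=\int\varphi^2|A|^{d-2}|\nabla|A||^2$, $J=\int\varphi|A|^{d-1}\nabla\varphi\cdot\nabla|A|$, $K=\int|A|^d\varphi^2$, $L=\int|A|^d|\nabla\varphi|^2$, $P=\int|A|^{d+2}\varphi^2$. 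Summing these and using the identity $a_m C(n)+n=(n-1)^2/4$ together with the strict pinching $a_m P\leq a_m(C(n)-\eta)K$, $\eta:=C(n)-\sup_M|A|^2>0$, collapses the $K$-contribution into a single $-a_m\eta K$ and yields
\begin{equation*}
a_m\eta K \leq -\frac{4\epsilon_m-(d-2)^2}{4}I + (d-2)J + L.
\end{equation*}
When $(d-2)^2<4\epsilon_m$ (which is equivalent to $d\in 2(1-\sqrt{2/mn},1+\sqrt{2/mn})$), the coefficient in front of $I$ is negative, and a weighted Young inequality applied to $|(d-2)J|\leq|d-2|I^{1/2}L^{1/2}$ (the latter by Cauchy--Schwarz) absorbs both the $I$-term and the cross term, leaving
\begin{equation*}
a_m\eta K \leq \frac{4\epsilon_m}{4\epsilon_m-(d-2)^2}\, L.
\end{equation*}

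Specializing $\varphi$ to the standard cutoff equal to $1$ on $B_p(R)$, vanishing outside $B_p(2R)$ with $|\nabla\varphi|\leq 2/R$, gives $L\leq \frac{4}{R^2}\int_{B_p(2R)}|A|^d$ and $K\geq \int_{B_p(R)}|A|^d$, hence $\int_{B_p(R)}|A|^d\leq CR^{-2}\int_{B_p(2R)}|A|^d$. If $\int_{B_p(R_0)}|A|^d>0$ for some $R_0$, iterating this doubling inequality forces $R^{-2}\int_{B_p(2R)}|A|^d$ to remain bounded below by a positive constant for all $R\geq R_0$, contradicting the sub-quadratic growth \eqref{1.2}; hence $|A|\equiv 0$ and $M$ is totally geodesic. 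The principal obstacle is the algebraic balancing in the final absorption: the Young parameter must simultaneously cancel the negative $I$-contribution and keep the coefficient of $L$ finite, and it is exactly this balance that dictates the admissible range $(d-2)^2<4\epsilon_m$; the identity $a_m C(n)+n=(n-1)^2/4$ is the second delicate ingredient, without which the $K$-terms would not eliminate after the sum.
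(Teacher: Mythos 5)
Your proposal is correct and follows essentially the same route as the paper: the same Kato--Simons inequality \eqref{2.1} combined with the Cheung--Leung bound $\lambda_1\geq (n-1)^2/4$ tested on $\varphi|A|^{d/2}$ (the paper's $f|A|^{q+1}$ with $d=2(q+1)$), the same calibration $\beta(m)C(n)+n=(n-1)^2/4$, and the same cutoff argument driven by \eqref{1.2}. Your bookkeeping with $I,J,K,L,P$ and a single Young absorption is just a cleaner, more explicit rewriting of the paper's $\epsilon$-choices in \eqref{2.3}--\eqref{2.11}, and the closing contradiction needs no iteration: one application of your doubling inequality already contradicts \eqref{1.2}.
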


An interesting related problem is the study of the stability operator on
minimal submanifolds in the hyperbolic space $\Hy^m$. We briefly
describe now some basic facts.

Given a complete noncompact Riemannian manifold $M^n$, fix a
continuous function $\beta:M\to\R$ and consider the Laplacian
operator $\Delta$ acting on the space $C^\infty(M)$. We denote
by $\lambda_1(L_\beta,M)$ the first eigenvalue of the operator
$L_\beta=\Delta+\beta$, which is defined by
\be	\label{1.4}
\lambda_{1}(L_{\beta},M)= \inf_{f\in C_0^{\infty}(M), f\neq 0}
\frac{\int_{M}(|\nabla f|^{2}-\beta f^{2})}{\int_{M} f^{2}}.
\en
Note that, when $\beta=0$, $\lambda_1(L_0,M)$ recover the usual
first eigenvalue of $M$.

In this direction, one significant contribution is due McKean \cite{MK},
who proved  that if $M$ is simply connected  and its seccional curvature
satisfies $K_M\leq-1$, then
\be	\label{1.5}
\lambda_{1}(M)\geq\frac{(n-1)^{2}}{4} = \lambda_1(\Hy^n).
\en
In the context of submanifolds, Cheng and Leung \cite{CL} proved
that if $M^n$ is a complete minimal submanifold of $\Hy^m$, then
\be	\label{1.6}
\lambda_{1}(M)\geq\frac{(n-1)^{2}}{4}.
\en

Motivated by the second variation formula for the volume of minimal
submanifolds in the hyperbolic space, Seo \cite{KE} introduced the
concept of super stability of such submanifolds. More precisely, a
complete minimal submanifold $M^n$ of the hyperbolic space 
$\Hy^{n+m}$ is said to be {\em super stable} if
\[
\int_{M}\big(|\nabla f|^{2}-(|A|^{2}-n)f^2\big)\geq0,
\]
for all $f\in C_{0}^{\infty}(M)$. We point out that for the case of hypersurfaces,
the concept of super stability is the same as the usual definition of
stability. Recall that the stability operator of a complete minimal
hypersurface $M^n$ of $\Hy^{n+1}$ is $L_{|A|^{2}-n}$, where $A$ is
the second fundamental form of $M^n$. Moreover, it follows from
\eqref{1.4} and \eqref{1.6} that the first eigenvalue of the stability
operator of a complete totally geodesic hypersurface of $\Hy^{n+1}$ is
\be	\label{1.7}
\lambda_1(L_{|A|^{2}-n}, M)=\frac{(n-1)^{2}}{4}+n.
\en

In this direction, the first author and Wang studied in \cite{BZ} the stability
operator for complete minimal submanifolds $M^n$ in $\Hy^{n+m}$. 
They showed that if the condition \eqref{1.2} is satisfied and  the first
eigenvalue of the super stability operator is greater than a certain 
constant, then a Simon's type theorem holds if $2\leq n\leq 5$, and
for all $n\neq 3$ if $m=1$. They also obtain upper estimates for the
first eigenvalue this operator. 

\vspace{.2cm}

In our next result we present a version of \cite[Theorem 1.2]{BZ} for
minimal submanifolds $M^n$ of the hyperbolic space $\Hy^{n+m}$,
with a condition involving the norm of the second fundamental form
of $M^n$ and the first eigenvalue of the super stability operator for
dimensions $n\geq 6$. We will denote the first eigenvalue 
$\lambda_1(L_{|A|^{2}-n}, M)$ of the super stability operator of $M$
by $\overline{\lambda}_1$. 

\begin{theorem}	\label{Theorem 1.3}
Let $M^n$, $n\geq 6$, be a complete minimal submanifold
of the hyperbolic space $\Hy^{n+m}$. If the condition \eqref{1.2} is 
satisfied and
\be \label{1.10}
\sup_{x\in M}|A|^{2}(x)<2(\overline{\lambda}_{1}-2n),
\en
then $M$ is total geodesic.
\end{theorem}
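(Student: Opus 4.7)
The plan is to combine Simons' identity for the second fundamental form with the Rayleigh quotient characterisation of $\overline{\lambda}_{1}$, in the same spirit as \cite{BZ}. Writing $u=|A|$, the Simons-type inequality for minimal submanifolds of $\Hy^{n+m}$ together with the refined Kato inequality $|\nabla A|^{2}\geq(1+2/(mn))|\nabla u|^{2}$ yields
\[
u\,\Delta u\;\geq\;\tfrac{2}{mn}|\nabla u|^{2}-C(m)\,u^{4}-n\,u^{2},
\]
with $C(1)=1$ and $C(m)=3/2$ for $m\geq 2$. Multiplying by a cut-off $\phi^{2}$, where $\phi=\phi_{R}$ is supported in $B_{p}(2R)$ with $|\nabla\phi_{R}|\leq C/R$, and integrating by parts gives an estimate controlling $\int_{M}|\nabla u|^{2}\phi^{2}$ by $\int_{M}u^{4}\phi^{2}$, $\int_{M}u^{2}\phi^{2}$, and a boundary term involving $|\nabla\phi|^{2}$.

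Next, we insert $f=u\phi$ in the Rayleigh quotient characterisation of $\overline{\lambda}_{1}$. Substituting the bound on $\int_{M}|\nabla u|^{2}\phi^{2}$ obtained above, and absorbing the cross terms via Cauchy--Schwarz with an arbitrarily small constant, one arrives at an inequality of the shape
\[
\Bigl(1-\tfrac{C(m)}{K}\Bigr)\!\int_{M}u^{4}\phi^{2}+\Bigl(\overline{\lambda}_{1}-n-\tfrac{n}{K}\Bigr)\!\int_{M}u^{2}\phi^{2}\;\leq\;C_{\phi}\!\int_{M}u^{2}|\nabla\phi|^{2},
\]
where $K=1+2/(mn)$. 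Choosing $\phi=\phi_{R}$ and letting $R\to\infty$, hypothesis \eqref{1.2} (with $d=2$, which belongs to the allowed interval) kills the right-hand side.

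The conclusion then splits by codimension. For $m=1$, the coefficient $1-C(m)/K=2/(n+2)$ is positive and the $u^{4}$ term can be dropped; the remaining coefficient of $u^{2}$ is $\overline{\lambda}_{1}-2n(n+1)/(n+2)$, which is positive because \eqref{1.10} already forces $\overline{\lambda}_{1}>2n\geq 2n(n+1)/(n+2)$, so $\int_{M}u^{2}=0$. For $m\geq 2$ and $n\geq 6$, the coefficient of $u^{4}$ is non-positive; this is the main difficulty, and it is handled by the crude bound $u^{4}\leq\sup_{M}u^{2}\cdot u^{2}$, which absorbs the quartic contribution into the quadratic one. The surviving inequality reads
\[
\Bigl(\overline{\lambda}_{1}-\tfrac{2n(mn+1)}{mn+2}-\tfrac{mn-4}{2(mn+2)}\sup_{M}u^{2}\Bigr)\!\int_{M}u^{2}\phi_{R}^{2}\;\leq\;o(1),
\]
and the key point is the arithmetic fact that the hypothesis \eqref{1.10}, rewritten as $\overline{\lambda}_{1}>\tfrac{1}{2}\sup_{M}u^{2}+2n$, strictly dominates the bracket above: their difference equals $(3\sup_{M}u^{2}+2n)/(mn+2)>0$. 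Hence $\int_{M}u^{2}=0$ in the limit and $M$ is totally geodesic.

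The main obstacle is precisely this last case. The constants coming from Simons, from the refined Kato inequality, and from the hypothesis \eqref{1.10} must conspire exactly to defeat the negative $u^{4}$ coefficient once the sup-norm absorption is used; the margin $(3\sup_{M}u^{2}+2n)/(mn+2)$ is where the whole argument lives, and any weakening of the Kato constant or of \eqref{1.10} would break the final inequality.
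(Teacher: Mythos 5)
Your overall strategy is the right one and is close to the paper's: both proofs combine the Kato--Simons inequality \eqref{2.1} with the Rayleigh-quotient characterisation of $\overline{\lambda}_{1}$, tested against $|A|$ times a cut-off, and kill the boundary term with \eqref{1.2}. Your arithmetic also checks out: with $K=1+2/(mn)$ the quartic coefficient is $\frac{4-mn}{2(mn+2)}$, the sup-norm absorption turns the bracket into $\frac{2n(mn+1)}{mn+2}+\frac{mn-4}{2(mn+2)}\sup_M|A|^{2}$, and \eqref{1.10} beats it with margin $\frac{3\sup_M|A|^{2}+2n}{mn+2}>0$, so for small $\epsilon$ the coefficients stay positive. (The paper uses \eqref{1.10} differently --- it substitutes $2n<\overline{\lambda}_{1}-\tfrac12|A|^{2}$ pointwise so that the $|A|^{d+2}$ terms cancel exactly between \eqref{2.13} and \eqref{2.16} --- but your sup-absorption is an equally valid way to use the same hypothesis, and in the case you treat it even lets you conclude $\int_M|A|^{2}=0$ directly.)

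The genuine gap is your parenthetical ``with $d=2$, which belongs to the allowed interval.'' The hypothesis \eqref{1.2} asserts sub-quadratic growth of $\int_{B_p(R)}|A|^{d}$ for \emph{some} $d$ in the interval $2(1-\sqrt{2/mn},1+\sqrt{2/mn})$; you do not get to choose $d$, and for $d\neq 2$ the growth of $\int_{B_p(R)}|A|^{d}$ gives no control on $\int_{B_p(R)}|A|^{2}$, which is exactly what your right-hand side $\int_M|A|^{2}|\nabla\phi_R|^{2}$ requires. So as written you prove only the special case $d=2$ of the theorem. To cover general $d$ one must run the same scheme with weighted test functions $f|A|^{q+1}$, $d=2(q+1)$ (this is where \eqref{2.9} and the interval condition on $d$ enter), and then the limit $R\to\infty$ only yields $|\nabla|A||^{2}|A|^{2q}\equiv 0$, i.e.\ $|A|\equiv c$ constant, as in \eqref{2.17}--\eqref{2.18}; excluding $c\neq 0$ requires the additional argument the paper gives: \eqref{1.2} then forces $\mathrm{Vol}(B_p(R))=o(R^{2})$, whence $\lambda_{1}(M)=0$ by Cheng--Yau, contradicting $\lambda_{1}(M)\geq\frac{(n-1)^{2}}{4}$. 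Both the weighted version of your integral inequality and this volume-growth/Cheng--Yau step are missing from your proposal, and they are needed for the statement as given.
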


Finally, the next result is an improvement of \cite[Theorem 1.1]{BZ}
by changing a pinching constant for an optimal one. More precisely,
for the case of complete minimal surfaces in $\Hy^4$, if the condition
\eqref{1.2} is satisfied for a certain constant $d$, we found an upper 
bound for $\overline{\lambda}_{1}$ by an optimal constant that 
depends only on $d$.

\begin{theorem}	\label{Theorem 1.4}
Let $M^n$, $n\geq 2$ and $n\neq 3$, be a complete minimal submanifold
of the hyperbolic space $\Hy^{n+m}$. Suppose that there exists a constant
$d$ such that \eqref{1.2} is satisfied. Then we have the following situations:
\begin{enumerate}
\item[(i)] If $m=1$, 
\be	\label{1.12}
\overline{\lambda}_{1}>\frac{n^{2}d^{2}}{4(n(d-1)+2)}+n,
\en
and
\be\label{1.13}
d\in
\left\{ \begin{array}{ll}
\left(0,1/2\right), & \mbox{if} \ n=2,  \\
\left((n-1)/n,(n-1)(n-2)/n\right), & \mbox{if} \ n=4 \ \mbox{or} \ 5, \\
(2-2\sqrt{2/n},2+2\sqrt{2/n}), & \mbox{if} \ n\geq6,  \\
\end{array} \right.
\en
then $M$ is total geodesic.
\item[(ii)] If $n=m=2$ and $d\in(2/3,2)$, then 
$\overline{\lambda}_{1}\leq\frac{d^{2}}{2d-1}+2$.
\end{enumerate}
\end{theorem}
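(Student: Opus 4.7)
My plan is to derive both parts of Theorem~\ref{Theorem 1.4} from Simons' inequality combined with a careful choice of test function in the Rayleigh quotient \eqref{1.4} for the super stability operator. Together with Kato's refined inequality $|\nabla A|^{2}\geq (1+2/(mn))|\nabla|A||^{2}$, Simons' identity for a minimal submanifold $M^{n}\subset\Hy^{n+m}$ yields a pointwise inequality of the form
\[
|A|\,\Delta|A|\;\geq\;\tfrac{2}{mn}\,|\nabla|A||^{2}\;-\;a_{m}|A|^{4}\;-\;n|A|^{2},
\]
with $a_{1}=1$ and $a_{m}=2-\tfrac{1}{m}$ for $m\geq 2$. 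Both parts of the theorem are obtained by inserting $f=|A|^{d/2}\phi_{R}$ into the Rayleigh quotient, where $\phi_{R}$ is a standard Lipschitz cut-off, supported in $B_{p}(R)$, equal to $1$ on $B_{p}(R/2)$ and satisfying $|\nabla\phi_{R}|\leq C/R$; the assumption \eqref{1.2} then ensures that all terms carrying $|\nabla\phi_{R}|$ tend to zero.

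\textbf{Part (i).} Expand $|\nabla f|^{2}$, and integrate $\int|A|^{d-1}\phi_{R}^{2}\,\Delta|A|$ by parts, substituting Simons' inequality. After absorbing the cross terms $\int|A|^{d-1}\phi_{R}\langle\nabla|A|,\nabla\phi_{R}\rangle$ by Young's inequality with a small parameter and cleaning up, one reaches a master inequality of the shape
\[
\bigl(\overline{\lambda}_{1}-n-g(d,n)\bigr)\!\int|A|^{d}\phi_{R}^{2}\;+\;C(d,n)\!\int|A|^{d+2}\phi_{R}^{2}\;\leq\;\frac{K}{R^{2}}\!\int_{B_{p}(R)}\!|A|^{d},
\]
where, in the hypersurface case $m=1$, the optimal choice of Young parameter produces $g(d,n)=\tfrac{n^{2}d^{2}}{4(n(d-1)+2)}$. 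The intervals for $d$ in \eqref{1.13} are precisely those that simultaneously guarantee $n(d-1)+2>0$ and $C(d,n)\geq 0$, the latter being a quadratic inequality in $d$ whose roots sit at $2\pm 2\sqrt{2/n}$ for $n\geq 6$ and at the alternative bounds $(n-1)/n$, $(n-1)(n-2)/n$ in the low-dimensional regime. Letting $R\to\infty$ and invoking \eqref{1.2}, we conclude that $\int_{M}|A|^{d}=0$ as soon as \eqref{1.12} holds, hence $|A|\equiv 0$ and $M$ is totally geodesic.

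\textbf{Part (ii).} For $n=m=2$, we no longer seek a contradiction but rather a direct bound on the Rayleigh quotient
\[
R(f_{R})=\frac{\int\bigl(|\nabla f_{R}|^{2}-(|A|^{2}-2)f_{R}^{2}\bigr)}{\int f_{R}^{2}}
\]
with the same test function $f_{R}=|A|^{d/2}\phi_{R}$. Multiplying Simons' inequality by $|A|^{d-2}\phi_{R}^{2}$ and integrating by parts lets us control $\int|A|^{d-2}|\nabla|A||^{2}\phi_{R}^{2}$ linearly in $\int|A|^{d+2}\phi_{R}^{2}$ and $\int|A|^{d}\phi_{R}^{2}$, with constants governed by $2d-1$. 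Substituting this bound in the numerator of $R(f_{R})$, the coefficient of $\int|A|^{d+2}\phi_{R}^{2}$ becomes $\tfrac{3d^{2}-8d+4}{4(2d-1)}$, which is nonpositive exactly on $d\in(2/3,2)$ and can therefore be dropped; the coefficient of $\int|A|^{d}\phi_{R}^{2}$ simplifies to the advertised $\tfrac{d^{2}}{2d-1}+2$. Provided $|A|\not\equiv 0$ (otherwise $M$ is totally geodesic and both sides of the claimed inequality are unambiguously controlled by McKean's bound), the growth hypothesis \eqref{1.2} ensures that $\int|A|^{d}\phi_{R}^{2}$ does not vanish while all cut-off contributions are $o(1)$, giving $\overline{\lambda}_{1}\leq\tfrac{d^{2}}{2d-1}+2$ in the limit.

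\textbf{Main obstacle.} The delicate step is the bookkeeping in part (i): recovering the sharp threshold $\tfrac{n^{2}d^{2}}{4(n(d-1)+2)}+n$ requires one to optimise the Young-inequality parameter used to absorb the cross terms, and identifying the correct range of $d$ amounts to analysing when the residual coefficient $C(d,n)$ of $\int|A|^{d+2}\phi_{R}^{2}$ is nonnegative. Matching this quadratic condition against the constraint $n(d-1)+2>0$ is what forces three distinct admissible intervals according to whether $n=2$, $n\in\{4,5\}$, or $n\geq 6$, and the case-splitting—keeping track of which inequality is binding in each dimensional regime—is the main technical hurdle in the argument.
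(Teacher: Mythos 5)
Your part (i) is essentially the paper's own argument: the same test function $|A|^{d/2}\phi_R$ in the super stability inequality, the same Kato--Simons input, Young absorption of the cross terms, and the same master inequality whose two coefficients are exactly those of \eqref{2.22} with $m=1$; positivity of the $|A|^{d}$-coefficient is \eqref{1.12} and positivity of the $|A|^{d+2}$-coefficient is $(d-2)^{2}<8/n$. One small inaccuracy: the intervals in \eqref{1.13} are not ``precisely'' the admissibility conditions --- for $m=1$ the quadratic constraint has roots $2\pm2\sqrt{2/n}$ for \emph{every} $n\geq2$, and the stated intervals for $n=2,4,5$ are merely proper subintervals of $2(1-\sqrt{2/n},1+\sqrt{2/n})$ (which is exactly how the paper uses them); this does not affect validity, since the needed positivity holds on the stated ranges. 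Part (ii), however, is a genuinely different route. The paper argues by contradiction: assuming $\overline{\lambda}_{1}>\frac{d^{2}}{2d-1}+2$ as in \eqref{2.28}, it reruns the rigidity machinery of case (i) with the $m=2$ constants to force $|A|\equiv0$, invokes \eqref{1.7} to get $\overline{\lambda}_{1}=\frac94$ for the totally geodesic $\Hy^{2}$, and then observes that $4d^{2}-2d+1<0$ is impossible. You instead estimate the Rayleigh quotient directly with $f_R=|A|^{d/2}\phi_R$, using the integrated Simons--Kato inequality to bound $\int|A|^{d-2}|\nabla|A||^{2}\phi_R^{2}$ and dropping the $|A|^{d+2}$-term because its coefficient $\frac{3d^{2}-8d+4}{4(2d-1)}$ is nonpositive exactly on $[2/3,2]$; your coefficients check out, and with $|A|\not\equiv0$ the denominator is bounded below while \eqref{1.2} kills the cut-off errors. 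Your direct proof avoids the detour through the rigidity statement and the contradiction, at the price of having to treat the totally geodesic case separately (where the correct justification is the equality $\lambda_{1}(\Hy^{2})=\frac14$ as in \eqref{1.7}, rather than McKean's lower bound, together with $\frac{d^{2}}{2d-1}\geq1$ on $(1/2,\infty)$); the paper's contradiction argument gets that case for free but obscures why the constant $\frac{d^{2}}{2d-1}+2$ is the natural one, which your Rayleigh-quotient computation makes transparent. Both arguments share the same formal manipulations with powers $|A|^{d-2}$ near the zero set of $|A|$, so neither is more (nor less) rigorous on that technical point.
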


\section{Preliminaries}

In this section we establish an inequality that will be used throughout
the paper. Given a minimal submanifold $M^n$ in the hyperbolic space 
$\mathbb{H}^{n+m}$, we have the following Kato-type inequality
\be\label{2.1}
|A|\triangle| A|+\beta(m)|A|^{4}+n| A|^{2}\geq\frac{2}{nm}|\nabla| A||^{2},
\en
with $\beta(1)=1$ and $\beta(m)=\frac{3}{2}$, if $m\geq2$ (cf. \cite{CHI}).
For every constant $\alpha>0$, and taking into account \eqref{2.1}, 
we obtain the following inequality:
\be \label{2.2}
|A|^{\alpha}\triangle|A|^{\alpha}\geq\left(1-\frac{mn-2}{mn\alpha }\right)|\nabla|A|^{\alpha}|^{2}-\beta(m)\alpha|A|^{2\alpha+2}-n\alpha |A|^{2\alpha}.
\en
Let $q$ be a nonnegative constant and let $f\in C^{\infty}_{0}(M)$. Multiplying
the inequality \eqref{2.2} for $|A|^{2\alpha q}f^{2}$ and integrating over $M$,
we have
\begin{eqnarray} \label{2.3}
\begin{aligned}
\left(1-\frac{mn-2}{mn\alpha}\right) & 
\int_{M}|\nabla|A|^{\alpha}|^{2}|A|^{2\alpha q}f^{2}
\leq\beta(m)\alpha\int_{M}|A|^{2(q+1)\alpha+2}f^{2} \\
& + 
n\alpha \int_{M}|A|^{2(q+1)\alpha}f^{2} +
\int_{M}|A|^{(2q+1)\alpha}\triangle|A|^{\alpha}f^{2}.
\end{aligned}
\end{eqnarray}
By integration by parts in the last term of \eqref{2.3}, and using the Schwarz
inequality and the Young inequality, we obtain
\begin{eqnarray} \label{2.4}
\begin{aligned}
\left(2(q+1)-\frac{mn-2}{mn\alpha}-\epsilon\right) 
\int_{M}|\nabla|A|^{\alpha}|^{2}|A|^{2\alpha q}f^{2} 
\leq \frac{1}{\epsilon}\int_{M}|A|^{2(q+1)\alpha}|\nabla f|^{2} \\
+ \beta(m)\alpha\int_{M}|A|^{2(q+1)\alpha+2}f^{2}
+n\alpha \int_{M}|A|^{2(q+1)\alpha}f^{2}.
\end{aligned}
\end{eqnarray}

\section{Proof of Theorems}

In this section we will proof the main results of our paper.

\begin{proof}[Proof of Theorem \ref{Theorem 1.1}.]
Setting $d:=2(q+1)$ and taking
$\alpha=1$, the inequality \eqref{2.4} becomes
\begin{eqnarray} \label{2.5}
\begin{aligned}
\left(d-\frac{mn-2}{mn}-\epsilon\right)&\int_{M}|\nabla|A||^{2}|A|^{2q}f^{2}
\leq \beta(m)\int_{M}|A|^{d+2}f^{2} \\
&+n\int_{M}|A|^{d}f^{2}+{1}{\epsilon}\int_{M}|A|^{d}|\nabla f|^{2}.
\end{aligned}
\end{eqnarray}
On the other hand, it follows directly from the definition of $\lambda_{1}(M)$
that
\be\label{2.6}
\int_{M}|\nabla f|^{2}\geq\lambda_{1}\int_{M}f^{2}, \ \forall \ f \in C^{\infty}_{0}(M).
\en
Plugging $f|A|^{(q+1)}$ in \eqref{2.6} and using Young's inequality, we obtain
\begin{eqnarray} \label{2.7}
\begin{aligned}
\lambda_{1}\int_{M}|A|^{d}f^{2} & \leq 
\left(1+\frac{q+1}{\epsilon}\right)\int_{M}|A|^{d}|\nabla f|^{2} \\
& +
(q+1)(q+1+\epsilon)\int_{M}|\nabla |A||^{2}|A|^{2q}f^{2}.
\end{aligned}
\end{eqnarray}
Recall that, as $M^n$ is a minimal submanifold of the hyperbolic space
$\Hy^{n+m}$, the first eigenvalue $\lambda_1$ satisfies 
$\lambda_{1}\geq\frac{(n-1)^{2}}{4}$ (cf. \cite[Corollary 3]{CL}), and
thus the inequality \eqref{2.7} becomes
\begin{eqnarray} \label{2.8}
\begin{aligned}
\frac{(n-1)^{2}}{4}\int_{M}|A|^{d}f^{2} & \leq 
\left(1+\frac{q+1}{\epsilon}\right)\int_{M}|A|^{d}|\nabla f|^{2} \\
& + 
\left(\frac{d^{2}}{4}+(q+1)\epsilon\right)\int_{M}|\nabla |A||^{2}|A|^{2q}f^{2}.
\end{aligned}
\end{eqnarray}
Since $d\in 2(1-\sqrt{2/mn},1+\sqrt{2/mn})$, we have
\be\label{2.9}
\frac{d^{2}}{4}-d+\frac{mn-2}{mn}<0,
\en
and therefore we can choose $\epsilon>0$ in such a way that, 
by using \eqref{2.5} and \eqref{2.8}, we obtain
\begin{eqnarray} \label{2.10}
\begin{aligned}
\frac{(n-1)^{2}}{4}\int_{M}|A|^{d}f^{2} & \leq 
\left(1+\frac{q+1}{\epsilon}\right)\int_{M}|A|^{d}|\nabla f|^{2}+
\beta(m)\int_{M}|A|^{d+2}f^{2} \\
& +
n\int_{M}|A|^{d}f^{2}+\frac{1}{\epsilon}\int_{M}|A|^{d}|\nabla f|^{2},
\end{aligned}
\end{eqnarray}
that is,
\be\label{2.11}
\int_{M}\left(\frac{(n-1)^{2}}{4}-n-\beta(m)|A|^{2}\right)|A|^{d}f^{2} \leq 
C \int_{M}|A|^{d}|\nabla f|^{2},
\en
where $C$ is a positive constant. Now let $f$ be a smooth function
defined on $[0,\infty)$ such that $f\geq0$, $f\equiv1$ in $[0,R]$, 
$f\equiv0$ in $[2R,+\infty)$, and with $|f'|\leq\frac{2}{R}$. Consider
the composition $f\circ r$, where $r$ is the distance function from the
point $p$. It follows from \eqref{2.11} that
\be
\no\int_{B_{p}(R)}\left(\frac{(n-1)^{2}}{4}-n-\beta(m)|A|^{2}\right)|A|^{d}\leq \frac{4C}{R^{2}}\int_{B_{p}(2R)}|A|^{d}.
\en
By letting $R\rightarrow+\infty$ and using \eqref{1.2}, we have
\be
\no\int_{M}\left(\frac{(n-1)^{2}}{4}-n-\beta(m)|A|^{2}\right)|A|^{d}\leq 0.
\en
In view of the hypothesis on $M$, given by \eqref{1.3}, we have 
\be\no
\frac{(n-1)^{2}}{4}-n-\beta(m)|A|^{2}>0,
\en
that implies that $|A|=0$, that is, $M$ is totally geodesic.
\end{proof}

\begin{proof}[Proof of Theorem \ref{Theorem 1.3}.]
It follows from the definition of $\overline{\lambda}_{1}$ that
\be\label{2.12}
\int_{M}|\nabla f|^{2}\geq\int_{M}|A|^{2}f^{2}-n\int_{M}f^{2}+\overline{\lambda}_{1}\int_{M}f^{2},
\en
for all $f\in C_0^{\infty}(M)$. Plugging $f|A|^{(q+1)}$ in the inequality
\eqref{2.12} and using Young's inequality, we obtain
\begin{eqnarray} \label{2.13}
\begin{aligned}
\int_{M}|A|^{2(q+1)+2}f^{2} & +
(\overline{\lambda}_{1}-n)\int_{M}|A|^{2(q+1)}f^{2} 
\leq \frac{\epsilon+q+1}{\epsilon}\int_{M}|A|^{2(q+1)}|\nabla f|^{2} \\
& +
(q+1)(q+1+\epsilon)\int_{M}|\nabla |A||^{2}|A|^{2q}f^{2}.
\end{aligned}
\end{eqnarray}
Under the hypothesis \eqref{1.10}, we have
\[
2n<\overline{\lambda}_{1}-\frac{1}{2}|A|^{2},
\]
what replacing in inequality (\ref{2.5}) gives us
\begin{eqnarray} \label{2.14}
\begin{aligned}
\left(d-\frac{mn-2}{mn} -\epsilon\right) & \int_{M}|\nabla|A||^{2}|A|^{2q}f^{2}
\leq \beta(m)\int_{M}|A|^{d+2}f^{2} \\
& + (\lambda_{1}-n)\int_{M}|A|^{d}f^{2} 
- \frac{1}{2}\int_{M}|A|^{d+2}f^{2} \\
& +\frac{1}{\epsilon}\int_{M}|A|^{d}|\nabla f|^{2}.
\end{aligned}
\end{eqnarray}
Regrouping the terms in \eqref{2.14}, we can see that
\begin{eqnarray} \label{2.15}
\begin{aligned}
\left(d-\frac{mn-2}{mn}-\epsilon\right) & \int_{M}|\nabla|A||^{2}|A|^{2q}f^{2}
\leq\left(\beta(m)-\frac{1}{2}\right)\int_{M}|A|^{d+2}f^{2} \\
&+(\lambda_{1}-n)\int_{M}|A|^{d}f^{2}+\frac{1}{\epsilon}\int_{M}|A|^{d}|\nabla f|^{2}.
\end{aligned}
\end{eqnarray}
Recall that $\beta(1)=1$ and $\beta(m)=\frac{3}{2}$ if $m\geq2$. Thus,
we can rewrite the inequality above as
\begin{eqnarray} \label{2.16}
\begin{aligned}
\left(d-\frac{mn-2}{mn}-\epsilon\right) & \int_{M}|\nabla|A||^{2}|A|^{2q}f^{2}
\leq \int_{M}|A|^{d+2}f^{2} \\
& + 
(\lambda_{1}-n)\int_{M}|A|^{d}f^{2}+\frac{1}{\epsilon}\int_{M}|A|^{d}|\nabla f|^{2}.
\end{aligned}
\end{eqnarray}
Setting $d=2(q+1)$ in the inequality \eqref{2.13} and relating \eqref{2.16},
we get
\begin{eqnarray*}
\begin{aligned}
\left(d-\frac{mn-2}{mn}-\epsilon\right) & \int_{M}|\nabla|A||^{2}|A|^{2q}f^{2}
\leq\left(1+\frac{q+2}{\epsilon}\right)\int_{M}|A|^{d}|\nabla f|^{2} \\
& +
\left(\frac{d^{2}}{4}+(q+1)\epsilon\right)
\int_{M}|\nabla |A|^{\alpha}|^{2}|A|^{2q\alpha}f^{2},
\end{aligned}
\end{eqnarray*}
that is
\be
\no\left(-\frac{d^{2}}{4}+d-\frac{mn-2}{mn}-(q+2)\epsilon\right)\int_{M}|\nabla|A||^{2}|A|^{2q}f^{2}\leq C\int_{M}|A|^{d}|\nabla f|^{2}.
\en
Since \eqref{2.9} also holds here, we can choose $\epsilon>0$ such that
\[
-\frac{d^{2}}{4}+d-\frac{mn-2}{mn}-(q+2)\epsilon>0.
\]
Therefore, for such $\epsilon>0$, we have
\be\label{2.17}
& &\int_{M}|\nabla|A||^{2}|A|^{2q}f^{2}\leq C\int_{M}|A|^{d}|\nabla f|^{2},
\en
where $C$ is a positive constant that depends on $d$, $m$, $n$, $q$ and 
$\epsilon$. Proceeding as in the proof of Theorem \ref{Theorem 1.1}, we 
can choose a nonnegative smooth function $f$ such that
\be \label{2.18}
\int_{B_{p}(R)}|\nabla|A||^{2}|A|^{2q}\leq \frac{4C}{R^{2}}\int_{B_{p}(2R)}|A|^{d}.
\en
Taking $R\rightarrow+\infty$ and taking into account \eqref{1.2}, we 
conclude that the second fundamental form $A$ satisfies $|A|=c=const$.
If $c\neq0$, we know from \eqref{1.2} that
\be \label{2.19}
\lim_{R\rightarrow+\infty}
\frac{Vol[B_{p}(R)]}{R^{2}}=0.
\en
It then follows from \cite{CGY} that $\lambda_{1}(M)=0$ which contradicts
with \eqref{1.6}. Therefore $|A|=0$, and this concludes the proof.
\end{proof}

\begin{proof}[Proof of Theorem \ref{Theorem 1.4}.]
By setting $d:=2(q+1)\alpha$, the inequality \eqref{2.4} becomes
\begin{eqnarray} \label{2.20}
\begin{aligned}
\frac{mn(d-1-\alpha\epsilon)+2}{mn\alpha}
& \int_{M}|\nabla|A|^{\alpha}|^{2}|A|^{2\alpha q}f^{2} \leq 
\beta\alpha\int_{M}|A|^{d+2}f^{2} \\
& +
n\alpha \int_{M}|A|^{d}f^{2} + \frac{1}{\epsilon}\int_{M}|\nabla f|^{2}|A|^{d},
\end{aligned}
\end{eqnarray}
where $\beta=\beta(m)$ is such that $\beta(1)=1$ and $\beta(m)=\frac{3}{2}$,
if $m\geq2$. Plugging $f|A|^{(q+1)\alpha}$ in \eqref{2.12} with constants
$q\geq0$ and $\alpha>0$, and using Young's inequality, we obtain
\begin{eqnarray} \label{2.21}
\begin{aligned}
\int_{M}|A|^{d+2}f^{2} & + (\overline{\lambda}_{1}-n )\int_{M}|A|^{d}f^{2} \leq
\left(1+\frac{q+1}{\epsilon}\right)\int_{M}|A|^{d}|\nabla f|^{2} \\
&+
\left(\frac{d^{2}}{4\alpha^{2}}+(q+1)\epsilon\right)\int_{M}|\nabla |A|^{\alpha}|^{2}|A|^{2q\alpha}f^{2}.
\end{aligned}
\end{eqnarray}
Multiplying \eqref{2.20} by $\frac{d^2+4(q+1)\epsilon\alpha^2}{4\alpha^2}$ 
and \eqref{2.21} by $\frac{mn(d-1-\alpha\epsilon)+2}{mn\alpha}$, and joining
these new inequalities, we get
\begin{eqnarray} \label{2.21a}
\begin{aligned}
& \frac{mn(d-1- \alpha\epsilon)+2}{mn\alpha}
\left[\int_{M}|A|^{d+2}f^{2} + (\overline{\lambda}_{1}-n)\int_{M}|A|^{d}f^{2}
\right] \\
& \leq \frac{d^2+4(q+1)\epsilon\alpha^2}{4\alpha^2}
\left[\beta\int_{M}|A|^{d+2}f^{2} + n\int_{M}|A|^{d}f^{2}\right] \\
& + C\int_{M}|A|^{d}|\nabla f|^{2},
\end{aligned}
\end{eqnarray}
where $C$ is a positive constant that depends only on $m$, $n$, $q$,
$d$, $\epsilon$ and $\alpha$. Rearranging the terms in 
\eqref{2.21a}, we obtain
\begin{eqnarray} \label{2.22}
\begin{aligned}
&\left[\frac{mn(d-1)+2}{mn\alpha}(\overline{\lambda}_{1}-n)-\frac{nd^{2}}{4\alpha}-(\overline{\lambda}_{1}-n+n\alpha(q+1))\epsilon\right]
\int_{M}|A|^{d}f^{2} \\
& +  \left[\frac{mn(d-1)+2}{mn\alpha}-\frac{\beta d^{2}}{4\alpha}-
(\beta\alpha(q+1)+1)\epsilon\right]\int_{M}|A|^{d+2}f^{2} \\
& \leq C\int_{M}|A|^{d}|\nabla f|^{2}.
\end{aligned}
\end{eqnarray}
We consider separately two possible cases: \vspace{.2cm} \\
Case $(i): m=1$. In this case, we have $\beta(1)=1$, and the 
constant that appears in the first term on the left side of 
\eqref{2.22} becomes
\be\label{2.23}
\frac{n(d-1)+2}{n\alpha}-\frac{d^{2}}{4\alpha}-(\alpha(q+1)+1)\epsilon.
\en
Moreover, by \eqref{1.13} we can see that 
$d\in2(1-\sqrt{2/n},1+\sqrt{2/n})$, for all $n\geq2$. Similarly, the 
constant that appears in the second term on the left side of 
\eqref{2.22} becomes
\be\label{2.24}
\left(\frac{n(d-1)+2}{n\alpha}\right)(\overline{\lambda}_{1}-n)-\frac{nd^{2}}{4\alpha}-(\overline{\lambda}_{1}-n+n\alpha(q+1))\epsilon.
\en
Because of \eqref{1.12}, we have
\[
\overline{\lambda}_{1}>\frac{n^{2}d^{2}}{4(n(d-1)+2)}+n.
\]
Thus, we can find $\epsilon>0$ such that \eqref{2.23} and \eqref{2.24}
are both positive. On the other hand, from \eqref{2.22}, we obtain 
\be\label{2.25}
\int_{M}|A|^{d+2}f^{2}\leq C\int_{M}|A|^{d}|\nabla f|^{2},
\en
where $C$ is a positive constante that depends only on $n$, $d$,
$q$, $\alpha$ and $\epsilon$. Using \eqref{1.2} and arguing as in 
the end of the proof of Theorem \ref{Theorem 1.1}, we conclude
that $|A|=0$ along $M$, that is, $M$ is total geodesic. Note, 
furthermore, that the hypothesis \eqref{1.12} implies that
\eqref{1.7} is satisfied. 
\vspace{.2cm} \\
Case $(ii): m=n=2.$ In this case, we have $\beta(2)=\frac{3}{2}$, 
and the constants that appears on the left side of \eqref{2.22} 
become 
\be\label{2.26}
\frac{4(d-1)+2}{4\alpha}-\frac{3d^{2}}{8\alpha}-\left(\frac{3}{2}\alpha(q+1)+1\right)\epsilon
\en
and
\be\label{2.27}
\frac{4(d-1)+2}{4\alpha}(\overline{\lambda}_{1}-2)-\frac{d^{2}}{2\alpha}
-(\overline{\lambda}_{1}-2+2\alpha(q+1))\epsilon.
\en
On the other hand, if
\be\label{2.28}
\overline{\lambda}_{1}>\frac{d^{2}}{2d-1}+2,
\en
and since $d\in(2/3,2)$, we can find $\epsilon>0$ such that the
constants \eqref{2.26} and \eqref{2.27} are both positive. Arguing
as in the end of case $(i)$, we conclude that  $|A|=0$ along $M$,
that is, $M$ is total geodesic. Now, since the first eigenvalue of the
super stability operator of a totally geodesic submanifold in 
$\Hy^{n+m}$ is given by \eqref{1.7}, we conclude in this case that
$n=2$, and thus $\overline{\lambda}_{1}=\frac{9}{4}$. Therefore, 
\eqref{2.28} makes sense if only if
\[
\frac{d^{2}}{2d-1}+2<\frac{9}{4} \ \Leftrightarrow \ 4d^{2}-2d+1<0.
\]
Since the last inequality above has negative discriminant, we have
\[
\overline{\lambda}_{1}=\lambda_{1}(L_{\mid A\mid^{2}-2}, M)
\leq\frac{d^{2}}{2d-1}+2,
\]
and this concludes the proof.
\end{proof}

\begin{remark} 
The condition \eqref{1.2} can be replaced by
\be \label{2.29}
\lim_{R\to+\infty}\frac{1}{R^{k}}\int_{B_p(R)}|A|=0.
\en
In case $(ii)$ of Theorem \ref{Theorem 1.4}, we obtain
\be\label{2.30}
\overline{\lambda}_{1}\leq\frac{(k-1)^{2}}{2k-3}+2,
\en
with $k\in(5/3,3)$ and bearing in mind that $d=1$.
From this, we can replace $k=d+1$ in \eqref{2.25} in order to get
\be \label{2.31}
\int_{M}|A|^{k+1}f^{2}\leq C\int_{M}|A|^{k-1}|\nabla f|^{2},
\en
for all $f\in C^{\infty}_0(M)$, where $C$ is a positive constant. 
Changing $f$ by $f^{\frac{k}{2}}$ in \eqref{2.31}, it follows from
H\"{o}lder inequality that
\begin{eqnarray*} 
\int_{M}|A|^{k+1}f^{k} &\leq& \overline{C}
\int_{M}|A|^{k-1}f^{k-2}|\nabla f|^{2} \\
&\leq &\overline{C}\left(\int_{M}|A|^{k+1}f^{k}\right)^{\frac{k-2}{k}}
\left(\int_{M}|A||\nabla f|^{k}\right)^{\frac{2}{k}}.
\end{eqnarray*}
Arguing as in the end of the proof of Theorem \ref{Theorem 1.1},
we can choose a nonnegative smooth function $f$ such that
\[	
\left(\int_{B_p(R)}|A|^{k+1}\right)^{\frac{2}{k}}\leq \overline{C}\left(\frac{1}{R^{k}}\int_{B_p(R)}|A|\right)^{\frac{2}{k}},
\]
and the proof follows as in the end of the proof of Theorem
\ref{Theorem 1.4}.
\end{remark}

\Addresses

\end{document}